\definecolor{lo}{HTML}{33A02C}
\definecolor{med}{HTML}{1F78B4}
\definecolor{hi}{HTML}{E31A1C}
\title{Rounding-Error Analysis of Multigrid $V$-Cycles}
\author{Stephen F. McCormick\thanks{University of Colorado at Boulder, Boulder, CO
  (\email{stephen.mccormick@colorado.edu}).}
\and Rasmus Tamstorf\thanks{Walt Disney Animation Studios, Burbank, CA
  (\email{rt@acm.org}).}
}
\definecolor{darkblue}{rgb}{0.0,0.0,0.3}
\newtheorem{thm}{Theorem}
\newtheorem{rem}{Remark}
\DeclareMathOperator{\fl}{fl}
\newcommand{\ewed}{\boldsymbol{\dot{\varepsilon}}}
\newcommand{\ewe}{\boldsymbol{\varepsilon}}
\newcommand{\eweb}{\boldsymbol{\bar{\varepsilon}}}
\renewcommand{\Re}{\mathbb{R}}
\newcommand{\aladot}{{\dot{m}}_A^+}
\newcommand{\alpdot}{{\dot{m}}_P^+}
\newcommand{\alndot}{{\dot{m}}_N^+}
\newcommand{\pvar}{\tau}
\newcommand{\pvard}{{\dot{\pvar}}}
\newcommand{\pvarb}{{\bar{\pvar}}}
\newcommand{\tg}{$\mathcal{TG}$}
\newcommand{\ir}{$\mathcal{IR}$}
\newcommand{\fmg}{$\mathcal{FMG}$}
\begin{document}

\maketitle

\begin{abstract}
  This paper provides a rounding-error analysis for two-grid methods that use one relaxation step both before and after coarsening. The analysis is based on floating point arithmetic and focuses on a two-grid scheme that is perturbed on the coarse grid to allow for an approximate coarse-grid solve. Leveraging previously published results, this two-grid theory can then be extended to general $V(\mu,\nu)$-cycles, as well as full multigrid ({\fmg}). It can also be extended to mixed-precision iterative refinement ({\ir}) based on these cycles. An added benefit of the theory here over previous work is that it is obtained in a more organized, transparent, and simpler way.
\end{abstract}

    \begin{keywords}
        rounding-error analysis, floating point arithmetic, mixed precision, multigrid
    \end{keywords}

\begin{AMS}
%
%
%
  65F10, 65G50, 65M55
\end{AMS}

\section{Introduction and Notation}

Our aim is to analyze the effects of applying floating-point arithmetic to multigrid methods for computing the solution $y \in \Re^n$ of
\begin{equation}
Ay = r, 
\label{ayr}
\end{equation}
where $r \in \Re^n$ is given and $A \in \Re^{n \times n}$ is a given symmetric positive definite matrix, with at most $m_A$ nonzeros per row. In concert with the iterative refinement application of multigrid described in \cite{McCormick2021}, our focus is on a two-grid ({\tg }) cycle applied to (\ref{ayr}) that starts with a zero initial guess and uses both one {\em pre-relaxation} (i.e., before coarsening) and one {\em post-relaxation} (i.e., after coarsening). This allows all of the theory in \cite{McCormick2021} and \cite{Tamstorf2021} to be extended to a general $V(\mu, \nu)$-cycle. We assume that the reader is familiar with this earlier theory, but emphasize that the analysis here represents an improvement in terms of organization, transparency, and simplicity.

Assume that $A$ has been computed exactly in  ``low'' $\ewed$-precision as in \cite{McCormick2021} and, for simplicity, that $A$ is scaled so that $\|A\| = 1$, where $\|\cdot\|$ denotes the Euclidean norm. Assume also, as in \cite{McCormick2021}, that the interpolation matrix $P \in \Re^{n \times n_c}, n_c < n,$ and the Galerkin coarse-level matrix $A_c = P^tAP$ are exact to $\ewed$-precision, and that $P$ has at most $m_P$ nonzeros in any row or column and is scaled so that $\|A_c\| = 1$. The two-grid cycle is assumed to use pre-relaxation and post-relaxation given by the stationary linear iterations $x \leftarrow x - M (Ax - b)$ and $x \leftarrow x - N (Ax - b)$, respectively. Here, $M, N \in \Re^{n \times n}$ are easily computed nonsingular matrices that approximate $A^{-1}$ well enough to guarantee convergence in energy in that $\|I - M A\|_A < 1$ and $\|I - N A\|_A < 1$,  where $I$ is the identity. ($I_c$ is used below to denote the coarse-level identity.) Coarsening in {\tg } consists of an exact  solve of the Galerkin coarse-level problem that has been perturbed to allow for a recursive solve as in \cite{McCormick2021}. We specify this perturbed coarse-grid correction in {\tg } as $B_c A_c^{-1} P^t r$, where $B_c \in \Re^{n_c \times n_c}$ satisfies $\|B_c - I_c\|_{A_c} < 1$. 

We complete this section with additional notation. 
Denote the condition numbers of $A$ and $A_c$ by $\kappa = \|A\| \cdot \|A^{-1}\| =  \|A^{-1}\|$ and $\kappa_c = \|A_c\| \cdot \|A_c^{-1}\| = \|A_c^{-1}\|$, respectively. Define the energy norm by $\|\cdot\|_A = \|A^\frac{1}{2} \cdot\|$ and let
\[
\eta_A= \||A|\|, \,\, \eta_P = \||P|\|, \,\, \eta_M= \|M\|, \,\, \eta_N = \|N\|_A, 
\]
\[
\, \aladot = \frac{m_A + 1}{1 - (m_A + 1){\ewed}} , \,\, \alpdot = \frac{m_P + 1}{1 - (m_P + 1)\ewed} , \textrm{   and   }\alndot = \frac{m_N + 1}{1 - (m_N + 1)\ewed} .
\]
The measures used here for $M$ and $N$ are the respective Euclidean and energy norms. The Euclidean norm for pre-relaxation is a departure from what is used in \cite{McCormick2021} because it allows us to avoid the more cumbersome energy norm in the earlier stages of the proof; see (\ref{dr})-({\ref{dpm}). For Krylov methods, these two measures are the same because $\|M\|_A = \|A^\frac{1}{2}MA^{-\frac{1}{2}}\| = \|M\|$ when $M$ is a polynomial in $A$.

To account for rounding errors in relaxation, suppose that constants $\alpha_M$ and $\alpha_N$ exist such that computing $M z$ and $N z$ for a vector $z \in \Re^n$ in $\ewed$-precision yields the respective results
\begin{equation}
M  z + \delta_M, \, \|\delta_M\| \le \alpha_M \ewed \|z\|, \quad \textrm{and} \quad
N  z + \delta_N, \, \|\delta_N\| \le \alpha_N \ewed \|z\|.
\label{mest}
\end{equation}

Because our error bounds are posed in energy while our basic precision estimates are in the Euclidean norm, the analysis that follows makes frequent use of the following inequalities that are assumed to hold for any $w\in\Re^n$:
\begin{align}
\|w\| &= \|AA^{-1}w\| \le \|A^\frac{1}{2}\| \cdot \|A^\frac{1}{2}A^{-1}w\|= \|A^{-1}w\|_A , 
\label{r}\\
\|w_c\| &= \|A_c^{-\frac{1}{2}}A_c^\frac{1}{2}w_c\| \le \|A_c^{-\frac{1}{2}}\| \cdot \|A_c^\frac{1}{2}w_c\|= \kappa_c^\frac{1}{2} \|w_c\|_{A_c} , 
\label{rc}
\end{align}
and
\begin{equation}
\|w\|_A = \|A^\frac{1}{2}w\| \le \|w\| = \|A^{-\frac{1}{2}} A^\frac{1}{2} w\| \le \|A^{-\frac{1}{2}}\| \cdot \|w\|_A = \kappa^\frac{1}{2} \|w\|_A 
\label{e2E}
\end{equation}

The remainder of the paper states basic rounding-error estimates in Section~\ref{models}, followed by a description of {\tg } in Section~\ref{tg}. The two-grid theory is then presented in Section~\ref{analysis}, and concluding remarks about extending {\tg } to general $V(\mu, \nu)$-cycles, full multigrid (FMG), and progressive precision are given in Section~\ref{conclusion}.

\section{Rounding-Error Models}
\label{models}
Floating-point error estimates in this section are taken from \cite[Section 2]{Higham2002}.

Quantization error in $\ewed$-precision for $w \in \Re^n$ obeys the estimate
\begin{equation}
\fl(w) = w + \delta, \quad \|\delta\| \le \ewed \|w\| ,
\label{quant}
\end{equation}
where $\fl(\cdot)$ denotes the computed result. With $v, w \in \Re^n$, we also have that
\begin{equation}
\fl(v \pm w) = v \pm w + \delta, \quad \|\delta\| \le \ewed \|v \pm w\| .
\label{pm}
\end{equation}
Let the absolute value of a vector or matrix denote the respective vector or matrix of absolute values. For any $K \in \Re^{n \times n}$ and with $m_K$ and $\dot{m}^+_K$ defined in analogy to the above, the model for $\ewed$-precision computation of the residual $Kw - c$ is then given by
\begin{equation}
\fl(Kw - c) = Kw - c + \delta, \quad \|\delta\| \le \ewed \dot{m}^+_K (\|c\| + \||K|\| \cdot \|w\|).
\label{res}
\end{equation}
With $P$ and $P^t$ in mind, note that (\ref{res}) reduces loosely for $K$ in $\Re^{m \times n}$ or $\Re^{n \times m}$ to
\begin{equation}
\fl(Kw) = Kw + \delta, \quad \|\delta\| \le \ewed \dot{m}^+_K \||K|\| \cdot \|w\|.
\label{ax}
\end{equation}

Instead of using $\fl$ below, we add $\delta$'s to exact expressions to denote quantities computed in finite precision. For example, the {\em computed} residual in (\ref{res}) might be written as $Kw - c + \delta_r$ while the {\em computed} solution of $Kw = c$ might be $K^{-1}c + \delta$.

\section{Two-Grid Cycle}
\label{tg}

The pseudocode for the two-grid cycle, {\tg}, is shown in Algorithm \ref{alg-tg} below. 
All computations in this algorithm are performed in ``low'' $\ewed$-precision (green font), except for the exact, infinite-precision coarse-level solve (black font). Accordingly, since the input right-hand side (RHS) may be in higher precision, the cycle is initialized with a rounding step. The solver then proceeds by relaxing on the zero initial guess to the solution of (\ref{ayr}), improving the result by a coarse-level correction based on prolongation, and then applying one more relaxation step. We use subscripts $\mu$ and $\nu$ here and in the next section to identify quantities occurring respectively before and after coarse-grid correction.

\noindent
\begin{center}
\begin{minipage}{.8\linewidth}
\begin{algorithm}[H]
\footnotesize
\caption{$\texttt{Two-Grid (\tg) Correction Scheme}$}
\label{alg-tg}
\begin{algorithmic}[1]
\Require A, r, P, M, N.
\State $\textcolor{lo}{r \leftarrow} r$ \Comment{\textcolor{lo}{Round RHS} and Initialize {\tg }}\label{tg:round}
\State \textcolor{lo}{$y_\mu \leftarrow  M r$}\Comment{\textcolor{lo}{Pre-relax on Current Approximation $y=0$}}\label{tg:prerelax}
\State \textcolor{lo}{$r_\mu \leftarrow Ay_\mu - r$}\Comment{\textcolor{lo}{Evaluate {\tg } Residual for Pre-relaxed $y_\mu$}}\label{tg:preresidual}
\State \textcolor{lo}{$r_c \leftarrow P^tr_\mu$}\Comment{\textcolor{lo}{Restrict {\tg } Residual to Coarse-Level}}\label{tg:restrict}
\State $d_c \leftarrow B_cA_c^{-1}r_c$\Comment{Compute Perturbed Coarse-Level Solution}\label{tg:coarsesolve}
\State \textcolor{lo}{$d \leftarrow Pd_c$}\Comment{\textcolor{lo}{Interpolate Correction to Fine Level}}\label{tg:prolong}
\State \textcolor{lo}{$y_\nu \leftarrow y_\mu - d$}\Comment{\textcolor{lo}{Apply Correction to Approximation $y_\mu$}}\label{tg:correction}
\State \textcolor{lo}{$r_\nu \leftarrow Ay_\nu - r$}\Comment{\textcolor{lo}{Evaluate {\tg } Residual for Corrected $y_\nu$}}\label{tg:postresidual}
\State \textcolor{lo}{$r_N \leftarrow N r_\nu$}\Comment{\textcolor{lo}{Precondition Residual}}\label{tg:precond}
\State \textcolor{lo}{$y \leftarrow y_\nu - r_N$}\Comment{\textcolor{lo}{Post-relax on Current Approximation $y_\nu$}}\label{tg:postrelax}
\State\Return $y$\Comment{Return Approximate Solution of $Ay = r$}
\end{algorithmic}
\end{algorithm}
\end{minipage}
\end{center}
\vspace{1em}

Assume for the rest of this section that {\tg } is performed in infinite precision, meaning that $y$ and the other quantities  are computed in exact arithmetic. A basic assumption we make is that {\tg } converges in infinite precision. Noting that the respective initial and final errors are $0 - A^{-1}r = - A^{-1}r$ and $y - A^{-1}r$, we assume that there exists a $\rho_{tg}^* < 1$ such that
\[
\|y - A^{-1}r\|_A \le \rho_{tg}^* \|A^{-1}r\|_A .
\]
Note then that
\begin{equation}
\|y\|_A \le \|y - A^{-1}r\|_A + \|A^{-1}r\|_A \le (1 + \rho_{tg}^*) \|A^{-1}r\|_A \le 2 \|A^{-1}r\|_A .
\label{y}
\end{equation}
The theory uses the fact that the intermediate result $y_\nu$ does not increase the initial energy error. The fact that this is true is a direct result of the bound just after equation (5.2) in \cite{McCormick2021}. For completeness, we derive this bound tailored to our setting. Specifically, since $T \equiv I - P(P^tAP)^{-1}P^tA$ is an energy-orthogonal projection onto the energy-orthogonal complement of $P$ (c.f. \cite{tutorial}), it follows that the error propagation matrix connecting the initial error $- A^{-1}r$ to the intermediate error $y_\nu - A^{-1}r$ is bounded according to
\begin{align*}
\|(I - PB_c (P^tAP)^{-1}P^tA)(I - MA)\|_A^2 
& \le \|(I - PB_c (P^tAP)^{-1}P^tA)\|_A^2  \|I - MA\|_A^2 \\
& \le \|(I - PB_c (P^tAP)^{-1}P^tA)\|_A^2 \\
& = \|T\|_A^2 + \|P(B_c - I_c) (P^tAP)^{-1}P^tA\|_A^2  \\
& \le \|T\|_A^2 + \|B_c - I_c\|_{A_c}^2  \|(P^tAP)^{-1}P^tA\|_{A_c}^2  \\
& \le \|T\|_A^2 + \|I - T\|_A^2 = 1.
\end{align*}
We can thus conclude that
\begin{equation}
\|y_\nu - A^{-1}r\|_A \le \|A^{-1}r\|_A \textrm{   and   }\|y_\nu\|_A \le \|A^{-1}r\|_A + \|y_\nu - A^{-1}r\|_A \le 2 \|A^{-1}r\|_A .
\label{ynu}
\end{equation}
Note also that
\[
\|A^{-1}r_\mu\|_A = \|y_\mu - A^{-1}r\|_A = \|(I - MA) A^{-1}r\|_A \le \|A^{-1}r\|_A 
\]
and, since $I - A^\frac{1}{2}PA_c^{-1}P^tA^\frac{1}{2}$ is an orthogonal projection onto the orthogonal complement of the range of $A^\frac{1}{2}P$, then $A^\frac{1}{2}PA_c^{-1}P^tA^\frac{1}{2} \le I$ and, hence,
\[
\|A_c^{-1}P^tr_\mu\|_{A_c} = \langle PA_c^{-1}P^tr_\mu, r_\mu\rangle^\frac{1}{2} \le \langle A^{-1}r_\mu, r_\mu\rangle^\frac{1}{2} = \|A^{-1}r_\mu\|_A \le \|A^{-1}r\|_A .
\]
This bound and the fact that $\|B_c\|_{A_c} \le \|B_c - I_c\|_{A_c} + \|I_c\|_{A_c} < 2$ imply that
\begin{equation}
\|d_c\| \le \kappa_c^\frac{1}{2} 
\|d_c\|_{A_c} = \kappa_c^\frac{1}{2} \|B_cA_c^{-1}r_c\|_{A_c} \le 2\kappa_c^\frac{1}{2} \|A_c^{-1}P^tr_\mu\|_{A_c}
\le 2 \kappa_c^\frac{1}{2} \|A^{-1}r\|_A .
\label{dc}
\end{equation}

In the following, we make use of the above infinite-precision bounds to establish results for finite-precision computations.

\section{Finite Precision Two-Grid Theory}
\label{analysis}

The main theorem below makes use of the following parameters:
\begin{align*}
C_0 &=  (1 + \aladot (1 + \eta_A \eta_M) + \aladot \ewed + (1 + \aladot \ewed) \eta_A (\eta_M + \alpha_M  (1 + \ewed))) \ewed ,\\
C_1 &=  \kappa_c^\frac{1}{2} (\eta_P C_0 + \ewed \alpdot \eta_P (1 + \eta_A \eta_M + C_0)) ,\\
C_2 &= 2 \kappa_c^\frac{1}{2} \ewed \alpdot \eta_P (1 + C_1) + 2 C_1 , \\
C_3 &= C_2 + ((2 + C_2)\kappa^\frac{1}{2}  + (\eta_M + \alpha_M  (1 + \ewed))(1 + \ewed)) \ewed ,\\
C_4 &= \eta_A C_3 + \aladot ((\eta_A (2 + C_3) + 1) \kappa^\frac{1}{2} + \ewed) \ewed , \\
C_5 &= (2 + C_3 + \eta_N C_4 + \ewed (1 + \kappa^\frac{1}{2} C_4)) \kappa^\frac{1}{2} \ewed .
\end{align*} 
The complexity of these parameters is due to the many finite-precision steps that {\tg } requires. We simplify them somewhat in Remark~\ref{constants} by considering their asymptotic behavior with respect to $n$  for the applications we have in mind.

\begin{thm}{\bf Two-Grid Error Estimate.} Let $y$ denote the result of one{ \tg } cycle applied to (\ref{ayr}) using exact arithmetic. Then one{ \tg } cycle applied to (\ref{ayr}) using $\ewed-$precision arithmetic produces a result $y + \delta_y$ that satisfies
\begin{equation}
\|y+ \delta_y - A^{-1}r\|_A \le \rho_{tg} \|A^{-1}r\|_A, \quad \rho_{tg} = \rho_{tg}^* + \delta_{\rho_{tg}} ,
\label{conv}
\end{equation}
where $\delta_{\rho_{tg}} = C_3 + C_4 + C_5$.
\label{thm:tg}
\end{thm}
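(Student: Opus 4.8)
The plan is to run a step-by-step forward rounding-error analysis down Algorithm~\ref{alg-tg}, writing each computed quantity as its exact counterpart plus a perturbation (as in the theorem statement), bounding the perturbation with the models (\ref{quant})--(\ref{ax}) and (\ref{mest}), and passing that bound to the next line. The constants $C_0,\dots,C_5$ are then exactly the running perturbation bounds at the successive stages: $C_0$ bounds the Euclidean error $\delta_{r_\mu}$ in the computed pre-relaxation residual (lines~\ref{tg:round}--\ref{tg:preresidual}); $C_1$ bounds $\kappa_c^{1/2}$ times the error $\delta_{r_c}$ in the restricted residual (line~\ref{tg:restrict}), i.e.\ the quantity the perturbed coarse solve then amplifies by at most $\|B_c\|_{A_c}<2$; $C_2$ bounds, in energy, the error $\delta_d$ in the interpolated correction (lines~\ref{tg:coarsesolve}--\ref{tg:prolong}); $C_3$ bounds, in energy, the error $\delta_{y_\nu}$ in the corrected iterate (line~\ref{tg:correction}); and $C_4,C_5$ bound the perturbations generated in, and carried through, the post-relaxation block (lines~\ref{tg:postresidual}--\ref{tg:postrelax}).

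Carrying this out: rounding the RHS and applying $M$ gives $\|\delta_{y_\mu}\|\le(\eta_M+\alpha_M(1+\ewed))\ewed\|r\|$ from (\ref{quant}) and (\ref{mest}); inserting this into the residual model (\ref{res}) on line~\ref{tg:preresidual}, and using $\|r\|\le\|A^{-1}r\|_A$ from (\ref{r}), yields $\|\delta_{r_\mu}\|\le C_0\|A^{-1}r\|_A$. The matvec model (\ref{ax}) next bounds $\|\delta_{r_c}\|$; since $\|A_c^{-1}\delta_{r_c}\|_{A_c}\le\kappa_c^{1/2}\|\delta_{r_c}\|$ (the computation behind (\ref{rc})) and $\|B_c\|_{A_c}<2$, the exact solve on line~\ref{tg:coarsesolve} gives $\|\delta_{d_c}\|_{A_c}<2C_1\|A^{-1}r\|_A$, with $\|d_c\|\le2\kappa_c^{1/2}\|A^{-1}r\|_A$ from (\ref{dc}) controlling the rounding of line~\ref{tg:prolong}. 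Interpolation together with the isometry $\|Pv\|_A=\|v\|_{A_c}$ (immediate from $A_c=P^tAP$) gives $\|\delta_d\|_A\le C_2\|A^{-1}r\|_A$, and the subtraction on line~\ref{tg:correction}, bounded via (\ref{pm}) and (\ref{e2E}), then gives $\|\delta_{y_\nu}\|_A\le C_3\|A^{-1}r\|_A$. At this point the structural bound (\ref{ynu})---that the exact $y_\nu$ does not increase the energy error---upgrades this to $\|y_\nu+\delta_{y_\nu}-A^{-1}r\|_A\le(1+C_3)\|A^{-1}r\|_A$ and $\|y_\nu+\delta_{y_\nu}\|_A\le(2+C_3)\|A^{-1}r\|_A$; these, fed into the residual model (\ref{res}) on line~\ref{tg:postresidual} together with $\|Aw\|_A\le\eta_A\|w\|_A$ (from $|Av|\le|A||v|$ componentwise, as used in (\ref{res})), produce $C_4$, and lines~\ref{tg:precond}--\ref{tg:postrelax}---where $\|y\|_A\le2\|A^{-1}r\|_A$ from (\ref{y}) bounds the argument of the final subtraction---produce $C_5$.

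The decisive step is the assembly. Once the individual rounding errors are fixed, the computed result is an affine function of them, so the total error $\delta_y$ is the superposition of those errors propagated through the subsequent linear operations. The only propagated error that the post-relaxation block fails to contract is $\delta_{y_\nu}$: I would write $\delta_y=(I-NA)\delta_{y_\nu}+(\text{errors fresh to lines~\ref{tg:postresidual}--\ref{tg:postrelax}})$, invoke $\|I-NA\|_A<1$ to keep the first term below $C_3\|A^{-1}r\|_A$, and collect the fresh errors into $C_4+C_5$. The triangle inequality $\|y+\delta_y-A^{-1}r\|_A\le\|y-A^{-1}r\|_A+\|\delta_y\|_A\le(\rho_{tg}^*+C_3+C_4+C_5)\|A^{-1}r\|_A$ is then (\ref{conv}) with $\rho_{tg}=\rho_{tg}^*+C_3+C_4+C_5$.

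The main obstacle is bookkeeping discipline rather than any isolated hard estimate. At each line one must choose whether to carry the perturbation in the Euclidean or the energy norm---switching only through (\ref{r})--(\ref{e2E}) and $\|Pv\|_A=\|v\|_{A_c}$, and only where the switch does not inflate the constants (which is why the paper measures $M$ in the Euclidean norm)---and one must repeatedly fall back on the infinite-precision a priori bounds $\|r_\mu\|,\|r_\nu\|\le\|A^{-1}r\|_A$ (from (\ref{r}), (\ref{ynu})), (\ref{dc}), (\ref{y}), and (\ref{ynu}) to keep every intermediate bound proportional to $\|A^{-1}r\|_A$. The one genuinely delicate point is the final relaxation sweep: only by using $\|I-NA\|_A<1$ to absorb the propagated error $\delta_{y_\nu}$ does the accumulation avoid an extra factor that is not contracted, and close with exactly $\delta_{\rho_{tg}}=C_3+C_4+C_5$.
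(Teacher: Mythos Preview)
Your plan is essentially the paper's proof: a line-by-line forward error analysis, with $C_0$ bounding $\|\delta_{r_\mu}\|$, $2C_1$ bounding $\|\delta_{d_c}\|_{A_c}$, $C_2$ bounding $\|\delta_d\|_A$, and $C_3$ bounding $\|\delta_{y_\nu}\|_A$, all relative to $\|A^{-1}r\|_A$. The use of the a~priori infinite-precision bounds (\ref{y}), (\ref{ynu}), (\ref{dc}) to keep every estimate proportional to $\|A^{-1}r\|_A$, and the Euclidean/energy switching via (\ref{r})--(\ref{e2E}) and $\|Pv\|_A=\|v\|_{A_c}$, all match.

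The one departure is your ``decisive step.'' You write $\delta_y=(I-NA)\delta_{y_\nu}+(\text{fresh})$ and invoke $\|I-NA\|_A<1$ to keep the first piece under $C_3$. The paper does \emph{not} do this: it simply takes $\delta_y=\delta_{y_\nu}-\delta_{r_N}+\delta_\nu$ and applies the triangle inequality, bounding the three terms by $C_3$, the bound (\ref{rN}) on $\|\delta_{r_N}\|_A$, and $C_5$ respectively. The contraction $\|I-NA\|_A<1$ is never used to control $\delta_y$; it enters only through $\rho_{tg}^*$. So your claim that this contraction is ``the one genuinely delicate point'' without which one ``cannot close with exactly $C_3+C_4+C_5$'' is off: the paper closes by pure accumulation. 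Moreover, the paper's $C_4$ bounds the \emph{accumulated} $\|\delta_{r_\nu}\|_A$ and already contains the $\eta_A C_3$ term coming from $A\delta_{y_\nu}$; in your decomposition that piece has been absorbed into $(I-NA)\delta_{y_\nu}$, so your ``fresh errors'' would not naturally sum to the paper's $C_4+C_5$. Your route is perfectly valid and arguably tighter, but it organizes the last three lines differently from the paper and would produce slightly different constants.
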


\begin{proof}
The proof is organized in an ordered list of the $\ewed$-precision errors as they occur, with the bounds accumulating the estimates along the way. We emphasize that the quantities used in Algorithm~\ref{alg-tg} are understood here to be in infinite precision; errors in the computed quantities are expressed instead by adding deltas. For each step in the proof, we refer to the corresponding line number in Algorithm~\ref{alg-tg}.
\begin{itemize}

\item Line~\ref{tg:round}. $\delta_r =$ error in $\ewed$ quantization of $r$.

From (\ref{quant}) and (\ref{r}):
\begin{equation}
\|\delta_r\| \le \ewed \|r\| \le \ewed \|A^{-1}r\|_A .
\label{dr}
\end{equation}

\item Line~\ref{tg:prerelax}. $\delta_\mu =$ error in  $M(r + \delta_r)$ given $r + \delta_r$.

From (\ref{mest}), (\ref{r}), and (\ref{dr}):
\begin{equation}
\|\delta_\mu\| \le \alpha_M \ewed (\|r\| + \|\delta_r\|) \le \alpha_M  (1 + \ewed) \ewed \|A^{-1}r\|_A .
\label{dm}
\end{equation}

\item Line~\ref{tg:prerelax}. $\delta_{y_{\mu}} = M \delta_r + \delta_\mu$, accumulated error in  $y_\mu = M r$.

From (\ref{dr}) and (\ref{dm}):
\begin{equation}
\|\delta_{y_\mu}\| \le \eta_M \|\delta_r\| + \|\delta_\mu\| \le (\eta_M + \alpha_M  (1 + \ewed)) \ewed \|A^{-1}r\|_A .
\label{dym}
\end{equation}

\item Line~\ref{tg:preresidual}. $\delta_{a_\mu} =$ error in  $A (y_\mu + \delta_{y_\mu}) - (r + \delta_r)$ given $y_\mu + \delta_{y_\mu}$ and $r + \delta_r$.

From (\ref{res}), (\ref{r}), (\ref{dr}), and (\ref{dym}):
\begin{align}
\label{dam}
\|\delta_{a_\mu}\| &\le \ewed \aladot (\|r\| + \|\delta_r\| + \eta_A (\|y_\mu\| + \|\delta_{y_\mu}\|)) \\
&\le \ewed \aladot ((1 + \eta_A \eta_M) \|r\| + \|\delta_r\| + \eta_A \|\delta_{y_\mu}\|) \nonumber \\
&\le \aladot (1 + \eta_A \eta_M + \ewed + \eta_A (\eta_M + \alpha_M  (1 + \ewed)) \ewed) \ewed \|A^{-1}r\|_A \nonumber .
\end{align}

\item Line~\ref{tg:preresidual}. $\delta_{r_{\mu}} = A \delta_{y_\mu} - \delta_r + \delta_{a_\mu}$, accumulated error in  $r_\mu = A y_\mu - r$.

From (\ref{dym}), (\ref{dr}), and (\ref{dam}):
\begin{equation}
\|\delta_{r_{\mu}}\| \le \eta_A \|\delta_{y_\mu}\| + \|\delta_r\| + \|\delta_{a_\mu}\| 
\le C_0 \|A^{-1}r\|_A .
\label{drm}
\end{equation}

\item Line~\ref{tg:restrict}. $\delta_{p_\mu} =$ error in  $P^t(r_{\mu} + \delta_{r_{\mu}})$ given $r_{\mu} + \delta_{r_{\mu}}$.

From (\ref{ax}), (\ref{drm}), and (\ref{r}):
\begin{align}
\label{dpm}
\|\delta_{p_\mu}\| &\le \ewed \alpdot \eta_P (\|r_{\mu}\| + \|\delta_{r_{\mu}}\|) \\
&\le \ewed \alpdot \eta_P (\|AMr\| + \|r\| + C_0) \|A^{-1}r\|_A \nonumber \\
&\le \ewed \alpdot \eta_P (1 + \eta_A \eta_M + C_0) \|A^{-1}r\|_A \nonumber .
\end{align}

\item Line~\ref{tg:coarsesolve}. $\delta_{d_c} = B_c A_c^{-1} (P^t\delta_{r_\mu} + \delta_{p_\mu})$, propagated error in  $d_c \equiv B_c A_c^{-1} P^tr_c$. 

From (\ref{drm}) and (\ref{dpm}):
\begin{equation}
\|\delta_{d_c}\|_{A_c} \le 2 \|A_c^{-1}(P^t\delta_{r_\mu} + \delta_{p_\mu})\|_{A_c} \le 2 \kappa_c^\frac{1}{2} (\eta_P \|\delta_{r_\mu}\| + \|\delta_{p_\mu}\|)\| \le 2 C_1 \|A^{-1}r\|_A .
\label{C1}
\end{equation}

\item Line~\ref{tg:prolong}. $\delta_{p_\nu} =$ error in  $P(d_c + \delta_{d_c})$ given $d_c + \delta_{d_c}$.

From (\ref{e2E}) twice, (\ref{ax}), (\ref{dc}), and (\ref{C1}):
\begin{equation}
\|\delta_{p_\nu}\|_A \le \ewed \alpdot \eta_P (\|d_c\| + \|\delta_{d_c}\|)
\le 2 \kappa_c^\frac{1}{2} \ewed \alpdot \eta_P (1 + C_1) \|A^{-1}r\|_A .
\label{dpn}
\end{equation}

\item Line~\ref{tg:prolong}. $\delta_{d} = P\delta_{d_c} + \delta_{p_\nu}$, accumulated error in  $d \equiv Pd_c$.

From (\ref{C1}) and (\ref{dpn}):
\begin{equation}
\|\delta_{d}\|_A \le \|P\delta_{d_c}\|_A + \|\delta_{p_\nu}\|_A = \|\delta_{d_c}\|_{A_c} + \|\delta_{p_\nu}\|_A \le C_2 \|A^{-1}r\|_A .
\label{C2}
\end{equation}

\item Line~\ref{tg:correction}. $\delta_{y_-} =$ error in  $(y_\mu + \delta_{y_\mu}) - (d + \delta_{d})$ given $y_\mu + \delta_{y_\mu}$ and  $d + \delta_{d}$.

From (\ref{pm}), (\ref{e2E}) twice, (\ref{ynu}), (\ref{C2}), and (\ref{dym}):
\begin{align}
\label{dyminus}
\|\delta_{y_-}\| &\le \ewed \|(y_\mu + \delta_{y_\mu}) - (d + \delta_{d})\| \\
&\le \ewed (\|y_\mu - d\| + \|\delta_{y_\mu}\| + \|\delta_{d}\|) \nonumber \\
&\le \ewed \kappa^\frac{1}{2} (\|y_\nu\|_A + \|\delta_{d}\|_A) + \ewed \|\delta_{y_\mu}\| \nonumber \\
&\le ((2 + C_2)\kappa^\frac{1}{2}  + (\eta_M + \alpha_M  (1 + \ewed))\ewed) \ewed  \|A^{-1}r\|_A \nonumber . 
\end{align}

\item Line~\ref{tg:correction}. $\delta_{y_\nu} = \delta_{y_\mu} - \delta_{d} + \delta_{y_-}$, accumulated error in  $y_\nu \equiv y_\mu - d$.

From (\ref{dym}), (\ref{C2}), and (\ref{dyminus}):
\begin{equation}
\|\delta_{y_\nu}\|_A \le \|\delta_{y_\mu}\|_A + \|\delta_{d}\|_A + \|\delta_{y_-}\|_A \le C_3 \|A^{-1}r\|_A .
\label{C3}
\end{equation}

\item Line~\ref{tg:postresidual}. $\delta_{a_\nu} =$ error in  $A(y_\nu + \delta_{y_\nu}) - (r + \delta_r)$ given $y_\nu + \delta_{y_\nu}$ and $r + \delta_r$.

From (\ref{r}) four times, (\ref{res}), (\ref{dr}), (\ref{ynu}), (\ref{C3}), and (\ref{r}):
\begin{align}
\label{dan}
\|\delta_{a_\nu}\|_A  
&\le \ewed \aladot (\eta_A (\|y_\nu\| + \|\delta_{y_\nu}\|) + \|r\| + \|\delta_r\|) \\
&\le \ewed \aladot (\kappa^\frac{1}{2} (\eta_A (\|y_\nu\|_A + \|\delta_{y_\nu}\|_A) + \|r\|_A) + \ewed \|A^{-1}r\|_A) \nonumber \\
&\le \aladot ((\eta_A (2 + C_3) + 1) \kappa^\frac{1}{2} + \ewed) \ewed \|A^{-1}r\|_A \nonumber .
\end{align}

\item Line~\ref{tg:postresidual}. $\delta_{r_\nu} = A\delta_{y_\nu} + \delta_{a_\nu}$, accumulated error in  $r_\nu \equiv Ay_\nu  - r$.

From (\ref{C3}) and (\ref{dan}):
\begin{equation}
\|\delta_{r_\nu}\|_A \le \eta_A \|\delta_{y_\nu}\|_A + \|\delta_{a_\nu}\|_A \le C_4 \|A^{-1}r\|_A  .
\label{C4}
\end{equation}

\item Line~\ref{tg:precond}. $\delta_\nu =$ error in  $N(r_\nu + \delta_{r_\nu})$ given $r_\nu + \delta_{r_\nu}$.

From (\ref{mest}), (\ref{ynu}) twice, (\ref{C4}):
\begin{align}
\|\delta_\nu\| &\le \alpha_N \ewed (\|r_\nu\| + \|\delta_{r_\nu}\|) \label{N} \\
&\le \alpha_N \ewed (\|A(y_\nu - A^{-1}r)\| + \kappa^\frac{1}{2} \|\delta_{r_\nu}\|_A) \nonumber \\
&= \alpha_N \ewed (\|y_\nu - A^{-1}r\|_A + \kappa^\frac{1}{2} \|\delta_{r_\nu}\|_A) \nonumber \\
&\le \alpha_N \ewed (1 + \kappa^\frac{1}{2} C_4) \|A^{-1}r\|_A \nonumber.
\end{align}

\item Line~\ref{tg:precond} $\delta_{r_N} = N\delta_{r_\nu} + \delta_\nu$ accumulated error in  $r_N \equiv N r_\nu$.

From (\ref{C4}), (\ref{N}):
\begin{equation}
\|\delta_{r_N}\|_A \le \eta_N \|\delta_{r_\nu}\|_A + \|\delta_\nu\| \le (\eta_N C_4 + \ewed (1 + \kappa^\frac{1}{2} C_4)) \|A^{-1}r\|_A  .
\label{rN}
\end{equation}

\item Line~\ref{tg:postrelax}. $\delta_\nu =$ error in  $(y_\nu + \delta_{y_\nu}) - (r_N + \delta_{r_N})$ given $y_\nu + \delta_{y_\nu}$ and $r_N + \delta_{r_N}$.

From (\ref{pm}), (\ref{e2E}) four times, (\ref{y}), (\ref{C3}), and (\ref{rN}):
\begin{align}
\label{C5}
\|\delta_\nu\|_A  
&\le \ewed  (\|y_\nu - r_N\| + \|\delta_{y_\nu}\| + \|\delta_{r_N}\|)  \\
&\le \kappa^\frac{1}{2} \ewed  (\|y\|_A + \|\delta_{y_\nu}\|_A + \|\delta_{r_N}\|_A)  \nonumber \\
&\le C_5 \|A^{-1}r\|_A \nonumber .
\end{align}

\item Line~\ref{tg:postrelax}. $\delta_y = \delta_{y_\nu} - \delta_{r_N} + \delta_\nu$,  accumulated error in  $y \equiv y_\nu - r_N$.

From (\ref{C3}), (\ref{rN}), (\ref{C5}):
\[
\|\delta_y\|_A \le \|\delta_{y_\nu}\|_A + \|\delta_{r_N}\|_A + \|\delta_\nu\|_A \le (C_3 + C_4 + C_5) \|A^{-1}r\|_A .
\]
\end{itemize}
\end{proof}

\begin{rem}
While the bound for $\delta_{\rho_{tg}}$ in (\ref{conv}) is of course very complicated, it can, however, be used to predict the effects of specific precisions under the reasonable assumption that the involved parameters can be estimated. For uniform discretization and refinement of uniformly elliptic PDEs, all of the terms in the $C_k$ other than $\ewed$ and $\kappa^\frac{1}{2}$ are typically $O(1)$. The condition numbers in this case typically grow in proportion to a power of the inverse of the mesh size (and, therefore, $n$ and $n_c$). Thus, since the objective is to choose $\ewed$ so small that $\rho_{tg} \ll 1$, it follows that the principal term in $\delta_{\rho_{tg}}$ is $\pvard \equiv \kappa^\frac{1}{2} \ewed$. Given the {\em progressive precision} objective of determining $\ewed$ for any given $n$ so that $\pvard$ is roughly constant, it is then easy to see that $\delta_{\rho_{tg}} = O(\pvard)$.

To be more specific, under assumptions that are typical of discrete elliptic equations, suppose that $A$ represents an element of a hierarchy of matrices where $\kappa$ grows with $n$ and $\ewed$ is chosen to ensure that $\rho_{tg}$ is suitably smaller than $1$. A close inspection of the $C_k$ reveals that theoretical convergence requires $\pvard$ to be substantially less than $1$. This in turn implies that $\ewed$ is substantially less than $1$ and must decrease as $n$ grows.  With this in mind, we can separate out the the linear term in $\pvard$ from each constant, while the rest would then involve combinations of $\ewed, \ewed^2, \pvard^2,$ and higher powers. Since the linear terms dominate, if we eliminate $C_0$ because of its dependence only on $\ewed$ and let $\xi = \left(\frac{\kappa_c}{\kappa}\right)^\frac{1}{2}$, then we can write $C_k = \gamma_k \pvard +O(\pvard^2), 1 \le k \le 5,$ where the constants are given by
\begin{align*}
\gamma_1 &= \xi (\eta_P (1 + \aladot (1 + \eta_A \eta_M) + \eta_A (\eta_M + \alpha_M)) + \alpdot \eta_P (1 + \eta_A \eta_M)) ,\\
\gamma_2 &= 2 \alpdot \eta_P + 2 \gamma_1, \\
\gamma_3 &= \xi \gamma_2 + 2 + \eta_M, \\
\gamma_4 &= \eta_A \gamma_3 + \aladot (2 \eta_A + 1), \\
\gamma_5 &= 2.
\end{align*}
If we now define the various quantities in the definition of the $\gamma_k$ as bounds relevant to each $A$ and $P$ in the hierarchy and we choose $\pvard$ so that it is roughly constant, then we can say that $\|\delta_{\rho_{tg}}\|_A$ is in fact bounded by a polynomial in $\pvard$ with constant coefficients. The above estimates show that we can choose $\pvard$ to be a fixed value less than $1$ to guarantee that the two-grid cycle converges optimally in the sense that $\rho_{tg}$ is bounded uniformly less than 1.
\label{constants}
\end{rem}

\section{Conclusion}
\label{conclusion}

The theory in \cite{McCormick2021} established optimal error bounds for iterative refinement, $V(1,0)$-cycles, and FMG based on the parameters $\pvard = \kappa^\frac{1}{2} \ewed$, $\pvar = \kappa^\frac{1}{2} \ewe$, and $\pvarb = \kappa \eweb$. The theory in \cite{Tamstorf2021} extends these results to include multiple relaxation sweeps, quantization of $A$ and $P$, and rounding errors in forming $A_c$. The crux of the theory in \cite{McCormick2021} is the bound of the $\ewed$-precision error for the two-grid cycle with just one pre-relaxation step. Theorem~\ref{thm:tg} above extends this two-grid theory to also allow for post-relaxation. The proof in \cite{McCormick2021} for the $V(1,0)$-cycle relies solely on the error being bounded in energy by a polynomial in $\pvard$. Our theory here achieves the same type of bound, which means that we can appeal to \cite{McCormick2021} and \cite{Tamstorf2021} to achieve all of the same extensions established there. Note, for example, that our theory thus confirms optimal error estimates for general progressive precision $V(\mu,\nu)$-cycles with multiple sweeps and FMG that uses them.

\bibliographystyle{siamplain}
\bibliography{../mpmg/mpmg}
\end{document}